\newcommand{\mb}[1]{\mathbb{#1}}
\newcommand{\mf}[1]{\mathbf{#1}}
\newcommand{\oE}{\mf{\operatorname{E}}}
\newcommand{\G}{\mb{G}}
\newcommand{\R}{\mb{R}}
\newcommand{\Z}{\mb{Z}}
\newcommand{\Rep}{P}
\newcommand{\V}[1]{\underline{#1}}
\newcommand{\abs}[1]{\left| #1 \right|}
\newcommand{\To}{\rightarrow}
\spnewtheorem{Lemma}{Lemma}[section]{\bfseries}{}
\spnewtheorem{Question}{Question}[section]{\itshape}{}
\spnewtheorem{Remark}{Remark}[section]{\itshape}{}
\newenvironment{proofof}[1]{\noindent\textbf{Proof of #1.}}{\qed}
\begin{document}

\title*{Twin bent functions and Clifford algebras}

\author{Paul~C.~Leopardi}
\authorrunning{P.~C.~Leopardi}
\institute{Mathematical Sciences Institute, The Australian National University.
\email{paul.leopardi@gmail.com}}

\date{Submitted: 15 September 2014, \\ Revised: 11 January 2015}

\maketitle

\abstract{
%
This paper examines a pair of bent functions on $\Z_2^{2m}$ and their relationship to
a necessary condition for the existence of an automorphism of an edge-coloured graph whose colours are defined
by the properties of a canonical basis for the real representation of the Clifford algebra $\R_{m,m}.$
Some other necessary conditions are also briefly examined.
%
}
\section{Introduction}
\label{sec-intro}
A recent paper \cite{Leo14Constructions} constructs a sequence of 
edge-coloured graphs
\index{edge-coloured graph} 
$\varDelta_m$ $(m \geqslant 1)$
with two edge colours, and makes the conjecture that for $m \geqslant 1,$ 
there is an automorphism of $\varDelta_m$ that swaps the two edge colours.
This conjecture can be refined into the following question.

\begin{Question}
\label{Question-1}
Consider the sequence of edge-coloured graphs $\varDelta_m$ $(m \geqslant 1)$ as defined in \cite{Leo14Constructions},
each with red subgraph $\varDelta_m[-1],$ and blue subgraph $\varDelta_m[1].$
For which $m \geqslant 1$ is there an automorphism of $\varDelta_m$ that swaps the subgraphs $\varDelta_m[-1]$ and $\varDelta_m[1]$?
\end{Question}

Note that the existence of such an automorphism automatically implies that the subgraphs $\varDelta_m[-1]$ and $\varDelta_m[1]$
are isomorphic.

Considering that it is known that $\varDelta_m[-1]$ is a 
strongly regular graph, 
\index{strongly regular graph}
a more general question can be asked concerning such graphs.
\newpage
First, we recall the relevant definition.

\begin{definition}\label{definition-strongly-regular-graph}  \cite{Bos63,BroCN89,Sei79}.
A simple graph $\Gamma$ of order $v$ is \emph{strongly regular} with parameters $(v,k,\lambda,\mu)$
if
\begin{itemize}
 \item 
each vertex has degree $k,$ 
 \item 
each adjacent pair of vertices has $\lambda$ common neighbours, and
\item
each nonadjacent pair of vertices has $\mu$ common neighbours.
\end{itemize}
\end{definition}

Now, the more general question.
\begin{Question}
\label{Question-2}
For which parameters $(v,k,\lambda,\mu)$ is there a an edge-coloured graph $\Gamma$ on $v$ vertices,
with two edge colours, red (with subgraph $\Gamma[-1]$) and blue (with subgraph $\Gamma[1]$), such that
the subgraph $\Gamma[-1]$ is a strongly regular graph with parameters $(v,k,\lambda,\mu),$
and such that there exists an automorphism of $\Gamma$ that swaps $\Gamma[-1]$ with $\Gamma[1]$?
\end{Question}
\begin{Remark}
Since the existence of such an automorphism implies that $\Gamma[-1]$ and $\Gamma[1]$ are isomorphic,
this implies that $\Gamma[1]$ is also a strongly regular graph with the same parameters as $\Gamma[-1].$

Questions~\ref{Question-1} and~\ref{Question-2}  were asked (in a slightly different form) at the workshop on 
``Algebraic design theory with Hada\-mard matrices'' in Banff in July 2014.
\end{Remark}

Further generalization gives the following questions.
\begin{Question}
\label{Question-3}
Given a positive integer $c>1,$ for what parameters $(v,k,\lambda,\mu)$ 
does there exist a $c k$ regular graph on $v$ vertices that can be given an edge colouring with $c$ colours, 
such that the edges corresponding to each color form a $(v,k,\lambda,\mu)$ strongly regular graph?

For what parameters is the $c$-edge-coloured $c k$ regular graph unique up to isomorphism? 
\end{Question}
\begin{Remark}
This question appears on MathOverflow \cite{Leo14MathOverflow}, and is partially answered
by Dima Pasechnik and Padraig \'O Cath\'ain,
specifically for the case where the $c k$ regular graph is the complete graph on $v = c k + 1$ vertices.
See the relevant papers by van Dam \cite{vanD03}, van Dam and Muzychuk \cite{vanDM10}, and \'O Cath{\'a}in \cite{OCat13}.
These partial answers do not apply to the specific case of Question~\ref{Question-1} because the graph $\varDelta_m$
is not a complete graph when $m>1.$
\end{Remark}

\begin{Question}
\label{Question-4}
For which parameters $(v,k,\lambda,\mu)$ does the edge-coloured graph $\Gamma$ from Question~\ref{Question-3}
have an automorphism that permutes the corresponding strongly regular subgraphs?
Which finite groups occur as permutation groups in this manner 
(i.e. as the group of permutations of strongly regular subgraphs of such an edge-coloured graph)? 
\end{Question}

This paper examines some of the necessary conditions for the graph $\varDelta_m$ to have an automorphism as per Question~\ref{Question-1}.
Questions~\ref{Question-2} to~\ref{Question-4} remain open for future investigation.

Considering that $\varDelta_m[-1]$ is a strongly regular graph, the first necessary condition is that
$\varDelta_m[1]$ is also a strongly regular graph, with the same parameters.
This is proven as Theorem~\ref{theorem-twins-are-strongly-regular} in Section~\ref{sec-strongly-regular}.
Some other necessary conditions are addressed in Section~\ref{sec-other}.

\section{A signed group and its real monomial representation}
\label{sec-first-bent}
The following definitions and results appear in the paper on Hada\-mard matrices and \index{Clifford algebras} \cite{Leo14Constructions},
and are presented here for completeness, since they are used below. 
Further details and proofs can be found in that paper, unless otherwise noted.

The signed group
$\G_{p,q}$ of order $2^{1+p+q}$ 
is extension of $\Z_2$ by $\Z_2^{p+q}$,
defined by the signed group presentation
\begin{align*}
\G_{p,q} := \bigg\langle \ 
&\mf{e}_{\{k\}}\ (k \in S_{p,q})\ \mid
\\
&\mf{e}_{\{k\}}^2 = -1\ (k < 0), \quad \mf{e}_{\{k\}}^2 = 1\ (k > 0),
\\
&\mf{e}_{\{j\}}\mf{e}_{\{k\}} = -\mf{e}_{\{k\}}\mf{e}_{\{j\}}\ (j \neq k) \bigg\rangle,
\end{align*}
where $S_{p,q} := \{-q,\ldots,-1,1,\ldots,p\}.$

The following construction of the real monomial representation $\Rep(\G_{m,m})$
of the group $\G_{m,m}$ is used in \cite{Leo14Constructions}.

The $2 \times 2$ orthogonal matrices
\begin{align*}
\oE_1 :=
\left[
\begin{array}{cc}
. & - \\
1 & .
\end{array}
\right],
\quad
\oE_2 :=
\left[
\begin{array}{cc}
. & 1 \\
1 & .
\end{array}
\right]
\end{align*}
generate $\Rep(\G_{1,1}),$ the real monomial representation of group $\G_{1,1}.$
The cosets of $\{\pm I\} \equiv \Z_2$ in $\Rep(\G_{1,1})$ are
ordered using a pair of bits, as follows.
\begin{align*}
0 &\leftrightarrow 00 \leftrightarrow \{ \pm I \},
\\
1 &\leftrightarrow 01 \leftrightarrow \{ \pm \oE_1 \},
\\
2 &\leftrightarrow 10 \leftrightarrow \{ \pm \oE_2 \},
\\
3 &\leftrightarrow 11 \leftrightarrow \{ \pm \oE_1 \oE_2 \}.
\end{align*}

For $m > 1$,
the real monomial representation $\Rep(\G_{m,m})$ of the 
group $\G_{m,m}$ consists of matrices of the form $G_1 \otimes G_{m-1}$
with $G_1$ in $\Rep(\G_{1,1})$ and $G_{m-1}$ in $\Rep(\G_{m-1,m-1}).$
The cosets of $\{\pm I\} \equiv \Z_2$ in $\Rep(\G_{m,m})$ are
ordered by concatenation of pairs of bits, 
where each pair of bits uses the ordering as per $\Rep(\G_{1,1}),$
and the pairs are ordered as follows.
\begin{align*}
0 &\leftrightarrow 00 \ldots 00 \leftrightarrow \{ \pm I \},
\\
1 &\leftrightarrow 00 \ldots 01 \leftrightarrow \{ \pm I_{(2)}^{\otimes {(m-1)}} \otimes  \oE_1 \},
\\
2 &\leftrightarrow 00 \ldots 10 \leftrightarrow \{ \pm I_{(2)}^{\otimes {(m-1)}} \otimes  \oE_2 \},
\\
&\ldots
\\
2^{2m} - 1 &\leftrightarrow 11 \ldots 11 \leftrightarrow \{ \pm (\oE_1 \oE_2)^{\otimes {m}} \}.
\end{align*}
(Here $I_{(2)}$ is used to distinguish 
this $2 \times 2$ unit matrix from the $2^m \times 2^m$ unit matrix $I$.)
In this paper, this ordering is called 
the \emph{Kronecker product ordering} of the cosets of $\{\pm I\}$ in $\Rep(\G_{m,m}).$

We recall here a number of well-known properties of the representation
$\Rep(\G_{m,m}).$
\begin{Lemma}\label{Lemma-representation-properties}
The group $\G_{m,m}$ and its real monomial representation $\Rep(\G_{m,m})$ 
satisfy the following properties.
\begin{enumerate}
\item 
Pairs of elements of $\G_{m,m}$ (and therefore $\Rep(\G_{m,m})$) either commute or anti\-commute:
for $g, h \in \G_{m,m},$ either $h g = g h$ or $h g = - g h.$
\item
The matrices $E \in \Rep(\G_{m,m})$ are orthogonal: $E E^T = E^T E = I.$
\item
The matrices $E \in \Rep(\G_{m,m})$ are either symmetric and square to give $I$ or 
skew and square to give $-I$: either $E^T = E$ and $E^2 =I$ or $E^T = -E$ and $E^2 = -I.$
\end{enumerate}
\end{Lemma}
%

Taking the positive signed element of each of the $2^{2m}$ cosets listed above
defines a transversal of $\{\pm I\}$ in $\Rep(\G_{m,m})$
which is also a monomial basis for the real representation of the Clifford algebra $\R_{m,m}$ in 
Kronecker product order.
In this paper, we call this ordered monomial basis the \emph{positive signed basis} of $\Rep(\R_{m,m}).$ 
For example,
$(I, \oE_1, \oE_2, \oE_1 \oE_2)$
is the positive signed basis of $\Rep(\R_{1,1}).$
Note: any other choice of signs will give a different transversal of $\{\pm I\}$ in $\Rep(\G_{m,m}),$
and hence an equivalent ordered monomial basis of $\Rep(\R_{m,m}),$ but we choose positive signs here for definiteness.

\begin{definition}\label{definition-gamma}
We define the function $\gamma_m : \Z_{2^{2 m}} \To \Rep(\G_{m,m})$ 
to choose the corresponding basis matrix from the positive signed basis of $\Rep(\R_{m,m}),$
using the Kronecker product ordering.
This ordering also defines a corresponding function on $\Z_2^{2 m},$
which we also call $\gamma_m.$
\end{definition}
For example,
\begin{align*}
\begin{array}{ll}
\gamma_1(0) = \gamma_1(00) = I,
&\gamma_1(1) = \gamma_1(01) = \oE_1,
\\
\gamma_1(2) = \gamma_1(10) = \oE_2,
\quad
&\gamma_1(3) = \gamma_1(11) = \oE_1 \oE_2.
\end{array}
\end{align*}

\section{Two bent functions}
\label{sec-second-bent}

We now define two functions, $\sigma_m$ and $\tau_m$ on $\Z_2^{2 m},$
and show that both of these are bent.
First, recall the relevant definition.

\begin{definition}\label{definition-bent-function} \cite[p. 74]{Dil74}.

A Boolean function $f : \Z_2^m \To \Z_2$ is \emph{bent}
if its Hada\-mard transform has constant magnitude.
Specifically:
\begin{enumerate}
 \item 
The Sylvester Hada\-mard matrix $H_m,$ of order $2^m,$
is defined by
\begin{align*}
H_1 &:=
\left[
\begin{array}{cc}
1 & 1 \\
1 & -
\end{array}
\right],
\\
H_m &:= H_{m-1} \otimes H_1, \quad \text{for} \quad m > 1.
\end{align*}

\item
For a Boolean function $f : \Z_2^m \To \Z_2,$
define the vector $\V{f}$ by
\begin{align*}
\V{f} &:= [(-1)^{f[0]}, (-1)^{f[1]}, \ldots, (-1)^{f[2^m-1]}]^T, 
\end{align*}
where the value of $f[i], i \in \Z_{2^m}$ is given by the value of $f$ on the binary digits of $i.$
\item
In terms of these two definitions, the Boolean function $f : \Z_2^m \To \Z_2$ is bent if
\begin{align*}
\abs{H_m \V{f}} &= C [1, \ldots, 1]^T.
\end{align*}
for some constant $C.$
\end{enumerate}
 
\end{definition}

The first function, $\sigma_m$ is defined and shown to be bent in \cite{Leo14Constructions}.
We repeat the definition here.

\begin{definition}\label{definition-sign-of-square-function}
We use the basis element selection function $\gamma_m$ of Definition~\ref{definition-gamma} to 
define the \emph{sign-of-square} function $\sigma_m : \Z_2^{2 m} \To \Z_2$ as
\begin{align*}
\sigma_m(i) &:=
\begin{cases}
1 \leftrightarrow \gamma_m(i)^2 = -I
\\
0 \leftrightarrow \gamma_m(i)^2 = I,
\end{cases}
\end{align*}
for all $i$ in $\Z_2^{2 m}$.
\end{definition}
\begin{Remark}
Property 3 from Lemma~\ref{Lemma-representation-properties}
ensures that $\sigma_m$ is well defined.
Also, since each $\gamma_m(i)$ is orthogonal,
$\sigma_m(i) = 1$ if and only if $\gamma_m(i)$ is skew.

From the property of Kronecker products that $(A \otimes B)^T = A^T \otimes B^T,$
it can be shown that $\sigma_m$ can also be calculated from $i \in \Z_2^{2 m}$ as
the parity of the number of occurrences of the bit pair 01 in $i,$
i.e. $\sigma_m(i) = 1$ if and only if the number of 01 pairs is odd.
Alternatively, for $i \in \Z_{2^{2m}},$ $\sigma_m(i) = 1$ if and only if the number of
1 digits in  the base 4 representation of $i$ is odd.
\end{Remark}

The following lemma is proven in \cite{Leo14Constructions}.

\begin{Lemma}\label{Lemma-sigma-m-is-bent}
The function $\sigma_m$ is a bent function on  $\Z_2^{2 m}$.
\end{Lemma}

The basis element selection function $\gamma_m$ also gives rise to a second function,
$\tau_m$ on $\Z_{2^{2 m}}.$
\newpage
\begin{definition}\label{definition-non-diagonal-symmetry-function}
We define the \emph{non-diagonal-symmetry} function $\tau_m$ on $\Z_{2^{2 m}}$ and $\Z_2^{2 m}$
as follows.

For $i$ in $\Z_2^2$:
\begin{align*}
&\tau_1(i) :=
\begin{cases}
1 &\text{if~}i = 10,\text{~so that~}\gamma_1(i) = \pm \oE_2,
\\
0 &\text{otherwise}.
\end{cases}
\end{align*}

For $i$ in $\Z_2^{2 m - 2}$:
\begin{align*}
&\tau_m (00 \odot i) := \tau_{m-1}(i), 
\\
&\tau_m (01 \odot i) := \sigma_{m-1}(i),
\\
&\tau_m (10 \odot i) := \sigma_{m-1}(i) + 1,
\\ 
&\tau_m (11 \odot i) := \tau_{m-1}(i).
\end{align*}
where $\odot$ denotes concatenation of bit vectors, and $\sigma$ is the sign-of-square function, as above.
\end{definition}

It is easy to verify that
$\tau_m(i) = 1$ if and only if $\gamma_m(i)$ is symmetric but not diagonal.
This can be checked directly for $\tau_1.$
For $m > 1$ it results from properties of the Kronecker product of square matrices,
specifically that $(A \otimes B)^T = A^T \otimes B^T,$ and that 
$A \otimes B$ is diagonal if and only if both $A$ and $B$ are diagonal.

The first main result of this paper is the following.
\begin{theorem}
\label{theorem-tau-m-is-bent}
The function $\tau_m$ is a bent function on $\Z_2^{2m}.$
\end{theorem}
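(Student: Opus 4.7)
The plan is to proceed by induction on $m$, exploiting the recursive structure in Definition~\ref{definition-non-diagonal-symmetry-function} together with the Kronecker factorization $H_{2m}=H_2\otimes H_{2m-2}$ implied by the definition of the Sylvester Hadamard matrix.

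For the base case $m=1$, Definition~\ref{definition-non-diagonal-symmetry-function} gives $\V{\tau_1}=[1,1,-1,1]^T$, and a direct four-row computation shows $H_2\V{\tau_1}=[2,-2,2,2]^T$, all of whose entries have absolute value $2=2^1$.

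For the inductive step I would first rewrite the four-clause recursion for $\tau_m$ at the level of sign-vectors. Writing a generic index of $\Z_2^{2m}$ as $ab\odot j$ with $ab\in\Z_2^2$ indexing the outer Kronecker factor and $j\in\Z_2^{2m-2}$ indexing the inner one, and using $(-1)^{\sigma_{m-1}(j)+1}=-\V{\sigma_{m-1}}[j]$, the recursion becomes
\begin{align*}
\V{\tau_m}\;=\;(e_0+e_3)\otimes\V{\tau_{m-1}}\;+\;(e_1-e_2)\otimes\V{\sigma_{m-1}},
\end{align*}
where $e_0,\ldots,e_3$ denote the standard basis of $\R^4$ in the order $00,01,10,11$. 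Applying $H_{2m}=H_2\otimes H_{2m-2}$ together with the elementary identities
\begin{align*}
H_2(e_0+e_3)=2(e_0+e_3),\qquad H_2(e_1-e_2)=2(e_2-e_1),
\end{align*}
yields a closed expression for $H_{2m}\V{\tau_m}$. The key geometric observation is that the image vectors $2(e_0+e_3)$ and $2(e_2-e_1)$ have \emph{disjoint} supports $\{0,3\}$ and $\{1,2\}$ in $\R^4$, so at any index $ab\odot j$ exactly one of the two terms contributes: the entry equals $\pm 2\,(H_{2m-2}\V{\tau_{m-1}})[j]$ when $a=b$, and $\pm 2\,(H_{2m-2}\V{\sigma_{m-1}})[j]$ when $a\neq b$.

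The conclusion then follows by combining the inductive hypothesis, which gives $\abs{(H_{2m-2}\V{\tau_{m-1}})[j]}=2^{m-1}$, with Lemma~\ref{Lemma-sigma-m-is-bent}, which gives $\abs{(H_{2m-2}\V{\sigma_{m-1}})[j]}=2^{m-1}$: every entry of $H_{2m}\V{\tau_m}$ then has absolute value $2\cdot 2^{m-1}=2^m$. The only delicate step is bookkeeping in setting up the tensor decomposition of $\V{\tau_m}$ — specifically, checking that the bit-concatenation convention $ab\odot j$ in Definition~\ref{definition-non-diagonal-symmetry-function} really does align the two-bit prefix with the outer factor in $H_2\otimes H_{2m-2}$; once that alignment is confirmed, disjointness of supports makes the magnitudes collapse cleanly.
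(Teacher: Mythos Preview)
Your argument is correct. It takes a genuinely different route from the paper's proof. The paper invokes Tokareva's criterion (Lemma~\ref{Lemma-Tokareva}) as a black box: setting $f_0=f_3=\tau_{m-1}$, $f_1=\sigma_{m-1}$, $f_2=\sigma_{m-1}+1$, it checks that the duals satisfy $\widetilde{f}_0+\widetilde{f}_1+\widetilde{f}_2+\widetilde{f}_3=1$ and concludes. You instead compute $H_{2m}\V{\tau_m}$ directly via the tensor factorization, and your key observation---that the specific pairings $f_0=f_3$ and $f_2=f_1+1$ make the images $H_2(e_0+e_3)=2(e_0+e_3)$ and $H_2(e_1-e_2)=2(e_2-e_1)$ land on disjoint supports---is precisely what replaces the dual condition in this special case. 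In effect you have proved inline the instance of Tokareva's lemma that the paper quotes. The paper's route is shorter once the external lemma is granted; yours is self-contained, avoids introducing dual functions altogether, and makes the mechanism transparent. Your caveat about aligning the two-bit prefix with the outer Kronecker factor is well placed: since $H_m=H_1^{\otimes m}$ under the paper's recursion, $H_{2m}=H_2\otimes H_{2m-2}$ holds as matrices, and the concatenation $ab\odot j\mapsto (ab)\cdot 2^{2m-2}+j$ indeed puts $ab$ in the outer slot, so the bookkeeping goes through.
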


The proof of Theorem~\ref{theorem-tau-m-is-bent} uses the following result, 
due to Tokareva \cite{Tok11number}, and stemming from the work of Canteaut, Charpin
and others \cite[Theorem~V.4]{CanCCP01cryptographic}\cite[Theorem~2]{CanC03decomposing}.
The result relies on the following definition.

\begin{definition}
For a bent function $f$ on $\Z_2^m$ the \emph{dual} function $\widetilde{f}$ is given by
\begin{align*}
(H_m [f])_i =: 2^{m/2} (-1)^{\widetilde{f}(i)} .
\end{align*}
\end{definition}

\begin{Lemma}
\cite[Theorem~1]{Tok11number}
\label{Lemma-Tokareva}
If a binary function $f$ on $\Z_2^{2m}$ can be decomposed into four
functions $f_0, f_1, f_2, f_3$ on $\Z_2^{2m-2}$ as
\begin{align*}
f(00 \odot i) =: f_0(i), \quad \quad &f(01 \odot i) =: f_1(i),
\\
f(10 \odot i) =: f_2(i), \quad \quad  &f(11 \odot i) =: f_3(i),
\end{align*}
where all four functions are bent, with dual functions such that
$\widetilde{f}_0 + \widetilde{f}_1 + \widetilde{f}_2 + \widetilde{f}_3 = 1,$
then $f$ is bent.
\end{Lemma}

\begin{proofof}{Theorem~\ref{theorem-tau-m-is-bent}}
In Lemma~\ref{Lemma-Tokareva}, set
$f_0 = f_3 := \tau_{m-1}, f_1 = \sigma_{m-1}, f_2 = \sigma_{m-1} + 1.$
Clearly, $\widetilde{f}_0 = \widetilde{f}_3.$
Also, $\widetilde{f}_2 = \widetilde{f}_1 + 1,$
since $H_{m-1} [f_2] = -H_{m-1} [f_1].$
Therefore 
$\widetilde{f}_0 + \widetilde{f}_1 + \widetilde{f}_2 + \widetilde{f}_3 = 1.$
Thus, these four functions satisfy the premise of Lemma~\ref{Lemma-Tokareva},
as long as both $\sigma_{m-1}$ and $\tau_{m-1}$ are bent. 

It is known that $\sigma_m$ is bent for all $m.$
It is easy to show that $\tau_1$ is bent, directly from its definition.
Therefore $\tau_m$ is bent.
\end{proofof}

\section{Bent functions and Hadamard difference sets}
\label{sec-difference-sets}
The following well known properties of Hada\-mard difference sets and bent functions
are noted in \cite{Leo14Constructions}.

\begin{definition}\label{definition-Hadmard-difference-set} \cite[pp. 10 and 13]{Dil74}.

The $k$-element set $D$ is a $(v,k,\lambda,n)$ \emph{difference set} in an abelian group $G$ of order $v$
if for every non-zero element $g$ in $G,$ 
the equation $g = d_i - d_j$
has exactly $\lambda$ solutions
$(d_i, d_j)$ with $d_i, d_j$ in $D.$
The parameter $n := k-\lambda.$
A $(v,k,\lambda,n)$ difference set with $v=4 n$ is called a 
\emph{Hada\-mard difference set}.
\index{Hadamard difference set}
\end{definition}

\begin{Lemma}
\cite[Remark 2.2.7]{Dil74} \cite{Men62,Rot76}.
\label{Lemma-Hadamard-difference-parameters}
A Hada\-mard difference set has parameters of the form
\begin{align*}
(v,k,\lambda,n) =\ &(4 N^2, 2 N^2 - N, N^2 - N, N^2) \\
   \text{or} \quad &(4 N^2, 2 N^2 + N, N^2 + N, N^2).
\end{align*}
\end{Lemma}

\begin{Lemma}
\cite[Theorem 6.2.2]{Dil74}
\label{Lemma-bent-Hadamard-difference}
The Boolean function $f : \Z_2^m \To \Z_2$ is bent if and only if $D := f^{-1}(1)$
is a Hada\-mard difference set.
\end{Lemma}

Together, these properties, along with Lemma~\ref{Lemma-sigma-m-is-bent} and Theorem~\ref{theorem-tau-m-is-bent}, 
are used here to prove the following result.
\begin{theorem}
\label{theorem-sigma-tau-Hadamard-difference-parameters}
The sets $\sigma_m^{-1}(1)$ and $\tau_m^{-1}(1)$ are both Hadamard difference sets, with the same parameters
\begin{align*}
(v_m,k_m,\lambda_m,n_m) &= (4^m, 2^{2 m - 1} - 2^{m-1}, 2^{2 m - 2} - 2^{m-1}, 2^{2 m - 2}).
\end{align*}
\end{theorem}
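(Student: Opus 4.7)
The plan is to combine the bent-function results with the bent-function/Hadamard-difference-set correspondence, and then pin down the parameter values by a direct cardinality count.

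First, Lemma \ref{Lemma-sigma-m-is-bent} states that $\sigma_m$ is bent, and Theorem \ref{theorem-tau-m-is-bent} states that $\tau_m$ is bent. Applying Lemma \ref{Lemma-bent-Hadamard-difference} to each immediately gives that $\sigma_m^{-1}(1)$ and $\tau_m^{-1}(1)$ are Hadamard difference sets in $\Z_2^{2m}$. In particular, $v_m = |\Z_2^{2m}| = 4^m$, so in the classification of Lemma \ref{Lemma-Hadamard-difference-parameters} we must have $N = 2^{m-1}$, forcing $n_m = N^2 = 2^{2m-2}$, and the parameters must take one of the two forms
\begin{align*}
(4^m,\ 2^{2m-1} - 2^{m-1},\ 2^{2m-2} - 2^{m-1},\ 2^{2m-2})
\quad\text{or}\quad
(4^m,\ 2^{2m-1} + 2^{m-1},\ 2^{2m-2} + 2^{m-1},\ 2^{2m-2}).
\end{align*}
It remains only to determine which sign is realised by each of the two difference sets and to check that the two choices agree.

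For $\sigma_m$, I would use the explicit characterisation recorded in the Remark following Definition~\ref{definition-sign-of-square-function}: $\sigma_m(i) = 1$ exactly when the number of bit-pairs equal to $01$ in $i$ is odd. A one-variable generating-function count over the $m$ independent pair-positions (each contributing the factor $3 + x$, marking the pair $01$) yields
\begin{align*}
k_m = |\sigma_m^{-1}(1)| = \frac{(3+1)^m - (3-1)^m}{2} = 2^{2m-1} - 2^{m-1},
\end{align*}
which picks out the first of the two parameter sets.

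For $\tau_m$, I would compute $|\tau_m^{-1}(1)|$ by an easy induction from the recursive Definition~\ref{definition-non-diagonal-symmetry-function}. Setting $a_m := |\tau_m^{-1}(1)|$ and using $|\sigma_{m-1}^{-1}(1)| + |(\sigma_{m-1}+1)^{-1}(1)| = 2^{2m-2}$ together with the four-block structure of $\tau_m$, one obtains the recurrence $a_m = 2 a_{m-1} + 2^{2m-2}$ with base value $a_1 = 1$ (read directly from $\tau_1$), whose closed form is $a_m = 2^{2m-1} - 2^{m-1}$. Alternatively, using the identification $\tau_m(i)=1 \iff \gamma_m(i)$ is symmetric but not diagonal, I could count symmetric and diagonal elements of the monomial basis separately via Kronecker-product parity (a matrix $A\otimes B$ is symmetric iff $A,B$ are of the same symmetry type, and diagonal iff both are diagonal), reaching the same total.

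No step is really a serious obstacle: the bent-to-difference-set passage is immediate from the cited lemmas, and the parameter determination reduces to a single combinatorial cardinality. The only thing that needs a small amount of care is ensuring that the same branch of Lemma \ref{Lemma-Hadamard-difference-parameters} is selected for both $\sigma_m$ and $\tau_m$ — hence I would carry out both cardinality computations explicitly rather than appealing to a generic symmetry argument.
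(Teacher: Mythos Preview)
Your argument is correct. The only difference from the paper's proof is in how you pin down which of the two parameter branches in Lemma~\ref{Lemma-Hadamard-difference-parameters} is realised. You compute $|\sigma_m^{-1}(1)|$ and $|\tau_m^{-1}(1)|$ separately by an explicit count (a generating-function argument for $\sigma_m$, a first-order recurrence for $\tau_m$). The paper instead argues structurally: since $\sigma_m(i)=1$ means $\gamma_m(i)$ is skew and $\tau_m(i)=1$ means $\gamma_m(i)$ is symmetric but not diagonal, the sets $\sigma_m^{-1}(1)$ and $\tau_m^{-1}(1)$ are disjoint, and the diagonal basis matrices lie in neither. Disjointness plus non-exhaustion of $\Z_2^{2m}$ then rules out every combination of signs except the one in which both sets take the smaller value $2^{2m-1}-2^{m-1}$, handling both functions in one stroke with no counting. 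Your approach is more computational but entirely self-contained; the paper's is shorter and treats $\sigma_m$ and $\tau_m$ simultaneously, at the cost of invoking the geometric interpretation of $\tau_m$. The paper in fact mentions your direct-count method as an independent check immediately after its proof.
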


\begin{proof}
Both $\sigma_m$ and $\tau_m$ are bent functions, as per
Lemma~\ref{Lemma-sigma-m-is-bent} and Theorem~\ref{theorem-tau-m-is-bent} respectively.
Therefore, by Lemma~\ref{Lemma-bent-Hadamard-difference}, both $\sigma_m^{-1}(1)$ and $\tau_m^{-1}(1)$ are Hadamard difference sets.
In both cases, the relevant abelian group is $\Z_2^{2 m},$ with order $4^m.$
Thus in Lemma~\ref{Lemma-Hadamard-difference-parameters} we must set $N=2^{m-1}$
to obtain that either
\begin{align*}
(v_m,k_m,\lambda_m,n_m) &= (4^m, 2^{2 m - 1} - 2^{m-1}, 2^{2 m - 2} - 2^{m-1}, 2^{2 m - 2})\ \text{or}
\\
(v_m,k_m,\lambda_m,n_m) &= (4^m, 2^{2 m - 1} + 2^{m-1}, 2^{2 m - 2} + 2^{m-1}, 2^{2 m - 2}).
\end{align*}
Since $\sigma_m(i) = 1$ if and only if $\gamma_m(i)$ is skew, and
$\tau_m(i) = 1$ if and only if $\gamma_m(i)$ is symmetric but not diagonal,
not only are these conditions mutually exclusive,
but also, for all $m \geqslant 1,$ the number of $i$ for which $\sigma_m(i) = \tau_m(i) = 0$ is positive.
These are the $i$ for which $\gamma_m(i)$ is diagonal.
Thus $k_m = 2^{2 m - 1} - 2^{m-1}$ rather than $2^{2 m - 1} + 2^{m-1}.$
The result follows immediately.
\qed
\end{proof}
As a check, the parameters $k_m$ can also be calculated directly, using the recursive definitions of each of
$\sigma_m$ and $\tau_m.$

\section{Bent functions and strongly regular graphs}
\label{sec-strongly-regular}
This section examines the relationship between the bent functions $\sigma_m$ and $\tau_m$ and
the subgraphs $\varDelta_m[-1]$ and $\varDelta_m[1]$ from Question~\ref{Question-1}.

First we revise some known properties of Cayley graphs and strongly regular graphs,
as noted in the previous paper on Hada\-mard matrices and Clifford algebras \cite{Leo14Constructions},
including the result of
Bernasconi and Codenotti~\cite{BerC99} on the relationship between bent functions and
strongly regular graphs.

First we recall a special case of the definition of a Cayley graph.
\begin{definition}\label{definition-Cayley-graph}
The \emph{Cayley graph} of a binary function $f : \Z_2^m \To \Z_2$ is 
the undirected graph with adjacency matrix $F$
given by $F_{i,j} = f(g_i + g_j),$ for some ordering $(g_1, g_2, \ldots)$ of $\Z_2^m.$
\end{definition}


The result of
Bernasconi and Codenotti~\cite{BerC99} on the relationship between bent functions and
strongly regular graphs is the following.
\begin{Lemma}\label{Lemma-Cayley-bent-strongly-regular} \cite[Lemma 12]{BerC99}.
The Cayley graph of a bent function on $Z_2^m$ is a strongly regular graph with $\lambda = \mu.$ 
\end{Lemma}

We use this result to examine the graph $\varDelta_m.$
The following two definitions appear in the previous paper \cite{Leo14Constructions}
and are repeated here for completeness.
\begin{definition}\label{definition-delta}
Let $\varDelta_m$ be the graph whose vertices are the $n^2=4^m$ 
canonical basis matrices of the real representation
of the Clifford algebra $\R_{m,m}$,
with each edge having one of two colours, $-1$ (red) and $1$ (blue):
\begin{itemize}
\item 
Matrices $A_j$ and $A_k$ are connected by a red edge if they have disjoint support and are anti-amicable,
i.e. $A_j A_k^{-1}$ is skew.
\item 
Matrices $A_j$ and $A_k$ are connected by a blue edge if they have disjoint support and are amicable,
i.e. $A_j A_k^{-1}$ is symmetric.
\item 
Otherwise there is no edge between $A_j$ and $A_k$.
\end{itemize}
We call this graph the \emph{restricted amicability / anti-amicability graph}
\index{amicability / anti-amicability graph}
of the Clifford algebra $\R_{m,m},$
the restriction being the requirement that an edge only exists for pairs of matrices with disjoint support.
\end{definition}

\begin{definition}\label{definition-red-subgraph}
For a graph $\Gamma$ with edges coloured by -1 (red) and 1 (blue),
$\Gamma[-1]$ denotes the \emph{red subgraph} of $\Gamma,$
the graph containing all of the vertices of $\Gamma,$ and all of the red (-1) coloured edges.
Similarly, $\Gamma[1]$ denotes the \emph{blue subgraph} of $\Gamma.$
\end{definition}

The following theorem is presented in \cite{Leo14Constructions}.
\begin{theorem}\label{th-phi-m-is-strongly-regular}
For all $m \geqslant 1,$
the graph $\varDelta_m[-1]$ is strongly regular, with parameters
$v_m = 4^m,$ $k_m = 2^{2 m - 1} - 2^{m - 1},$ $\lambda_m=\mu_m=2^{2 m - 2} - 2^{m - 1}.$
\end{theorem}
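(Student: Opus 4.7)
The plan is to identify $\varDelta_m[-1]$ with the Cayley graph of the bent function $\sigma_m$ (under the vertex ordering given by $\gamma_m$), at which point strong regularity is an immediate consequence of Lemma~\ref{Lemma-Cayley-bent-strongly-regular} applied to $\sigma_m$ (which is bent by Lemma~\ref{Lemma-sigma-m-is-bent}), and the parameters can be read off from Theorem~\ref{theorem-sigma-tau-Hadamard-difference-parameters}. Concretely, I would label the $4^m$ vertices of $\varDelta_m$ by $\Z_2^{2m}$ via $\gamma_m$ and show that $\gamma_m(i)$ and $\gamma_m(j)$ are joined by a red edge if and only if $\sigma_m(i+j) = 1$.

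To establish this identification I would carry out two matrix-level computations. First, each $\gamma_m(k)$ is orthogonal and the signed group $\G_{m,m}$ multiplies as $\gamma_m(i)\gamma_m(j)^{-1} = \pm\,\gamma_m(i+j)$; since symmetry and skewness are invariant under sign change, the anti-amicability condition ``$\gamma_m(i)\gamma_m(j)^{-1}$ is skew'' is exactly $\sigma_m(i+j)=1$. Second, writing each basis matrix as a signed permutation and tracking the underlying permutations, $\gamma_m(i)$ and $\gamma_m(j)$ have disjoint support precisely when $\gamma_m(j)^{-1}\gamma_m(i) = \pm\,\gamma_m(i+j)$ has zero diagonal, equivalently when $\gamma_m(i+j)$ is not diagonal. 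The key point is then that a skew orthogonal matrix has no nonzero diagonal entry (a diagonal orthogonal matrix is symmetric), so the disjoint-support requirement is automatic once anti-amicability is assumed. Hence $\varDelta_m[-1]$ is exactly the Cayley graph of $\sigma_m$ in the sense of Definition~\ref{definition-Cayley-graph}.

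With this identification in hand, Lemma~\ref{Lemma-Cayley-bent-strongly-regular} gives that $\varDelta_m[-1]$ is strongly regular with $\lambda_m = \mu_m$. The parameter $v_m = 4^m$ is the order of $\Z_2^{2m}$ and the degree is $k_m = \abs{\sigma_m^{-1}(1)} = 2^{2m-1} - 2^{m-1}$ by Theorem~\ref{theorem-sigma-tau-Hadamard-difference-parameters}. The common value of $\lambda_m$ and $\mu_m$ then follows from the Hadamard difference set identity $n_m = k_m - \lambda_m$ with $n_m = 2^{2m-2}$, giving $\lambda_m = \mu_m = 2^{2m-2} - 2^{m-1}$, as claimed.

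The main obstacle is the support bookkeeping: one must verify carefully that the seemingly geometric condition ``disjoint support of two signed permutation matrices'' coincides with the algebraic condition ``the product has a fixed-point-free permutation part'', and then observe the orthogonality fact that skewness forces a vanishing diagonal. Once this reduction is secured, the theorem follows from results already established in the paper.
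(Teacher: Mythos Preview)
Your proposal is correct and follows essentially the same route as the paper: the paper does not prove Theorem~\ref{th-phi-m-is-strongly-regular} separately but instead proves the stronger Theorem~\ref{theorem-twins-are-strongly-regular}, whose $\varDelta_m[-1]$ half proceeds exactly by identifying $\varDelta_m[-1]$ with the Cayley graph of $\sigma_m$ via the labelling $\gamma_m$, invoking Lemma~\ref{Lemma-Cayley-bent-strongly-regular} for strong regularity with $\lambda_m=\mu_m$, and reading $k_m$ off Theorem~\ref{theorem-sigma-tau-Hadamard-difference-parameters}. Two small differences: the paper computes $\lambda_m=\mu_m$ from the strongly regular graph identity $(v_m-k_m-1)\mu_m=k_m(k_m-1-\lambda_m)$ rather than from the difference-set relation $n_m=k_m-\lambda_m$ you use (both are immediate), and your explicit verification that skewness of $\gamma_m(i+j)$ forces a zero diagonal, hence disjoint support, is a detail the paper's proof leaves implicit in its definition of $\kappa_m$.
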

Unfortunately, the proof given there is incomplete,
proving only that $\varDelta_m[-1]$ is strongly regular,
without showing why $k_m = 2^{2 m - 1} - 2^{m - 1}$ and $\lambda_m=\mu_m=2^{2 m - 2} - 2^{m - 1}.$
In this section, we rectify this by proving the following.

\begin{theorem}\label{theorem-twins-are-strongly-regular}
For all $m \geqslant 1,$
both graphs $\varDelta_m[-1]$ and $\varDelta_m[1]$ is strongly regular, with parameters
$v_m = 4^m,$ $k_m = 2^{2 m - 1} - 2^{m - 1},$ $\lambda_m=\mu_m=2^{2 m - 2} - 2^{m - 1}.$
\end{theorem}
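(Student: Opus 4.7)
The plan is to realize both $\varDelta_m[-1]$ and $\varDelta_m[1]$ as Cayley graphs on $\Z_2^{2m}$ associated to the bent functions $\sigma_m$ and $\tau_m$, and then invoke the Bernasconi--Codenotti lemma (Lemma~\ref{Lemma-Cayley-bent-strongly-regular}) together with Theorem~\ref{theorem-sigma-tau-Hadamard-difference-parameters} to read off the explicit parameters.

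First I would establish the key translation: for any $i, j \in \Z_2^{2m}$, the product $\gamma_m(i)\gamma_m(j)^{-1}$ equals $\pm \gamma_m(i + j)$, where addition is bitwise XOR in $\Z_2^{2m}$. This follows from the anticommutation relations defining $\G_{m,m}$ together with Property~3 of Lemma~\ref{Lemma-representation-properties} (which yields $\gamma_m(j)^{-1} = \pm \gamma_m(j)$) and the Kronecker product structure used to define $\gamma_m$. The sign ambiguity is harmless because the properties ``skew'' and ``symmetric'' are invariant under negation.

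Next I would translate the two edge conditions. A pair $(\gamma_m(i), \gamma_m(j))$ in $\varDelta_m$ has disjoint support if and only if $\gamma_m(i + j)$ is non-diagonal, because the basis matrices are monomial and two of them share a common nonzero column position precisely when their (signed) product has a nonzero diagonal entry. Combined with the definitions of $\sigma_m$ (sign-of-square) and $\tau_m$ (non-diagonal-symmetry), this gives that $\gamma_m(i)$ and $\gamma_m(j)$ are joined by a red edge exactly when $\sigma_m(i + j) = 1$ (``skew'' forces non-diagonal automatically), and by a blue edge exactly when $\tau_m(i + j) = 1$ (``symmetric and disjoint support'' is ``symmetric and non-diagonal''). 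Since $\sigma_m(0) = \tau_m(0) = 0$, there are no self-loops, so $\varDelta_m[-1]$ is the Cayley graph of $\sigma_m$ and $\varDelta_m[1]$ is the Cayley graph of $\tau_m$ in the sense of Definition~\ref{definition-Cayley-graph}.

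Finally I would invoke the bent-to-strongly-regular correspondence. Lemma~\ref{Lemma-sigma-m-is-bent} and Theorem~\ref{theorem-tau-m-is-bent} assert that $\sigma_m$ and $\tau_m$ are bent, so Lemma~\ref{Lemma-Cayley-bent-strongly-regular} immediately gives that each of $\varDelta_m[-1]$ and $\varDelta_m[1]$ is strongly regular with $\lambda_m = \mu_m$. The degree $k_m = \abs{\sigma_m^{-1}(1)} = \abs{\tau_m^{-1}(1)} = 2^{2m-1} - 2^{m-1}$ is furnished by Theorem~\ref{theorem-sigma-tau-Hadamard-difference-parameters}, and a brief count of common neighbours in an abelian Cayley graph (the number of $w$ with $u + w, v + w \in D$ equals the number of representations of $u + v$ as a sum of two elements of $D$) identifies $\lambda_m = \mu_m$ with the Hadamard difference-set parameter $2^{2m-2} - 2^{m-1}$ from the same theorem. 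The principal obstacle I anticipate is the bookkeeping in the first two steps: making the identification ``disjoint support $\Leftrightarrow$ $\gamma_m(i + j)$ non-diagonal'' fully precise for the positive signed basis, and cleanly tracking the sign in $\gamma_m(i)\gamma_m(j)^{-1} = \pm \gamma_m(i + j)$. Everything after that is essentially a repackaging of results already in hand.
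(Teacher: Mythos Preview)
Your proposal is correct and follows essentially the same route as the paper: identify $\varDelta_m[-1]$ and $\varDelta_m[1]$ with the Cayley graphs of $\sigma_m$ and $\tau_m$ on $\Z_2^{2m}$, then apply Lemma~\ref{Lemma-Cayley-bent-strongly-regular} and Theorem~\ref{theorem-sigma-tau-Hadamard-difference-parameters}. The only minor divergence is in pinning down $\lambda_m=\mu_m$: the paper uses the strongly-regular identity $(v_m-k_m-1)\mu_m=k_m(k_m-1-\lambda_m)$ to solve $(v_m-1)\lambda_m=k_m(k_m-1)$, whereas you read $\lambda_m$ directly from the difference-set count of common neighbours---which is exactly the alternative derivation the paper records in its Remark after the proof.
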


\begin{proof}
Since each vertex of $\varDelta_m$ is a canonical basis element of the Clifford algebra $\R_{m,m},$
we can impose the Kronecker product ordering on the vertices, 
labelling each vertex $A$ by $\gamma_m{-1}(A) \in \Z_2^{2m}.$
The label $\kappa_m(a,b)$ of each edge $(\gamma_m(a),\gamma_m(b))$ of $\varDelta_m$ depends on $a+b$
in the following way:
\begin{align*}
\kappa_m(a,b) &:= \tau_m(a+b) - \sigma_m(a+b), \text{~that is,}
\\
\kappa_m(a,b) &=
\begin{cases}
-1, & \sigma_m(a+b) = 1 \quad (\Leftrightarrow \gamma_m(a+b) \text{~is skew}),
\\
0, & \sigma_m(a+b) = \tau_m(a+b) = 0 \quad (\Leftrightarrow \gamma_m(a+b) \text{~is diagonal}),
\\
1, & \tau_m(a+b) = 1 \quad (\Leftrightarrow \gamma_m(a+b) \text{~is symmetric but not diagonal}).
\end{cases}
\end{align*}
Thus $\varDelta_m[-1]$ is isomorphic to the Cayley graph of $\sigma_m$ on $\Z_2^{2m},$
and $\varDelta_m[1]$ is isomorphic to the Cayley graph of $\tau_m$ on $\Z_2^{2m}.$
Since, by Lemma~\ref{Lemma-sigma-m-is-bent} and Theorem~\ref{theorem-tau-m-is-bent},
both $\sigma_m$ and $\tau_m$ are bent functions on $\Z_2^{2m},$ 
Lemma~\ref{Lemma-Cayley-bent-strongly-regular} implies that
both $\varDelta_m[-1]$ and $\varDelta_m[1]$ are strongly regular graphs.

It remains to determine the graph parameters.
Firstly, $v_m$ is the number of vertices, which is $4^m.$

Since $\varDelta_m[-1]$ is regular, we can determine $k_m$ by examining one vertex,
$\gamma_m(0).$
The edges $(\gamma_m(0),\gamma_m(b))$ of $\varDelta_m[-1]$ are those for which $\sigma_m(b)=1,$
that is, the edges where $b$ is in the Hadamard difference set $\sigma_m^{-1}(1).$
Thus, by Theorem~\ref{theorem-sigma-tau-Hadamard-difference-parameters}, $k_m = 2 N^2 - N = 2^{2 m - 1} - 2^{m-1},$
where $N=2^{m-1}.$

Since $\varDelta_m[-1]$ is a strongly regular graph, it holds that
\begin{align*}
(v_m - k_m - 1)\mu_m &= k_m (k_m - 1 - \lambda_m)  
\end{align*}
\cite[p. 158]{Sei79}
and hence, since $\lambda_m=\mu_m,$ we must have $(v_m-1)\lambda_m = k_m(k_m-1).$
We now note that
\begin{align*}
k_m (k_m-1) &= (2 N^2 - N)(2 N^2 - N - 1)
= (v_m-1)(2^{2m - 2} - 2^{m-1}),
\end{align*}
so that $\lambda_m=\mu_m=2^{2 m - 2} - 2^{m-1}.$

Running through these arguments again, with $\varDelta_m[1]$ substituted for $\varDelta_m[-1]$
and $\tau_m$ substituted for $\sigma_m,$ yields the same parameters for $\varDelta_m[1].$
\qed   
\end{proof}

\begin{Remark}
A more elementary derivation of the value of $\lambda_m$ for $\varDelta_m[-1]$ follows.

There are $k_m(k_m-1)$ ordered pairs $(a,b)$ with $a \neq b$ and $\sigma_m(a)=\sigma_m(b)=1.$
Since $k_m (k_m-1)= (N^2-N)(4N^2-1),$
this gives exactly $N^2-N = 2^{2m - 2} - 2^{m-1}$ ordered pairs for each of other $4^m-1$ vertices of $\varDelta_m[-1].$

Also, considering that $\sigma_m^{-1}(1)$ is a Hadamard difference set,
and for $c \in \Z_2^{2m},$ $c \neq 0,$ consider one of the pairs $(a,b)$ such that
$\sigma_m(a)=\sigma_m(b)=1$ and $c=a+b.$
Thus $b=a+c$ and $\sigma_m(a)=\sigma_m(a+c)=1.$
Therefore, the graph $\varDelta_m[-1]$ contains the edges 
$(\gamma_m(0),\gamma_m(a)),$ $(\gamma_m(0),\gamma_m(b)),$ $(\gamma_m(c),\gamma_m(a)),$ and $(\gamma_m(c),\gamma_m(b)).$
\newpage
Thus, in the graph $\varDelta_m[-1],$ the vertices $\gamma_m(0)$ and $\gamma_m(c)$ have the two vertices
$\gamma_m(a)$ and $\gamma_m(b)$ in common.
This is true whether or not there is an edge between $\gamma_m(0)$ and $\gamma_m(c).$
The pair $(b,a)$ yields the same four edges.
Running through all such pairs $(a,b)$ 
and using Theorem~\ref{theorem-sigma-tau-Hadamard-difference-parameters} again,
we see that $\lambda_m=\mu_m=2N^2-N=2^{2 m - 2} - 2^{m-1}.$
\end{Remark}

\section{Other necessary conditions}
\label{sec-other}
This section examines two other necessary conditions for the existence of an automorphism of $\varDelta_m$ that swaps
$\varDelta_m[-1]$ with $\varDelta_m[1].$
The first condition follows.
\begin{theorem}
If an automorphism $\theta : \varDelta_m \To \varDelta_m$ exists that swaps $\varDelta_m[-1]$ with $\varDelta_m[1],$
then there is an automorphism $\Theta : \varDelta_m \To \varDelta_m$ that also swaps $\varDelta_m[-1]$ with $\varDelta_m[1],$
leaving $\gamma_m(0)$ fixed.
\end{theorem}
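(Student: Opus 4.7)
The plan is to exploit the fact that $\varDelta_m$ is (up to the labelling $\gamma_m$) an edge-coloured Cayley graph on the abelian group $\Z_2^{2m},$ so that every translation of $\Z_2^{2m}$ lifts to a colour-\emph{preserving} automorphism of $\varDelta_m.$ Composing the given colour-swapping automorphism $\theta$ with an appropriate translation will then produce a colour-swapping automorphism that fixes $\gamma_m(0).$

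More precisely, I would proceed in the following steps. First, recall from the proof of Theorem~\ref{theorem-twins-are-strongly-regular} that the edge label $\kappa_m(a,b)$ of $(\gamma_m(a),\gamma_m(b))$ depends only on the sum $a+b \in \Z_2^{2m}.$ Consequently, for each $c \in \Z_2^{2m}$ the map
\begin{align*}
T_c : \varDelta_m \To \varDelta_m, \quad \gamma_m(a) \mapsto \gamma_m(a+c)
\end{align*}
is a well-defined bijection on the vertex set, and it preserves every edge colour because $\kappa_m(a+c,b+c) = \kappa_m(a,b).$ Note also that $T_c \circ T_c$ is the identity, since the group has characteristic two.

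Next, given the hypothesised colour-swapping automorphism $\theta,$ define $c \in \Z_2^{2m}$ to be the unique element with $\theta(\gamma_m(0)) = \gamma_m(c),$ and set
\begin{align*}
\Theta := T_c \circ \theta.
\end{align*}
Then $\Theta(\gamma_m(0)) = T_c(\gamma_m(c)) = \gamma_m(c+c) = \gamma_m(0),$ so $\gamma_m(0)$ is fixed. Since $\theta$ swaps $\varDelta_m[-1]$ with $\varDelta_m[1]$ and $T_c$ preserves each of $\varDelta_m[-1]$ and $\varDelta_m[1],$ the composition $\Theta$ is again an automorphism of $\varDelta_m$ that swaps the two subgraphs.

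There is no real obstacle here; the only thing to check carefully is that translation by $c$ really is a colour-preserving automorphism of $\varDelta_m,$ which is immediate once one has the formula $\kappa_m(a,b) = \tau_m(a+b) - \sigma_m(a+b)$ already established in the proof of Theorem~\ref{theorem-twins-are-strongly-regular}. The argument is a routine vertex-transitivity trick for Cayley graphs and uses nothing specific about the functions $\sigma_m$ and $\tau_m$ beyond the fact that the colouring is a Cayley colouring on $\Z_2^{2m}.$
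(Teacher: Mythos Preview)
Your proof is correct and is essentially the same argument as the paper's: the paper defines $\phi$ by $\theta(\gamma_m(a))=\gamma_m(\phi(a))$ and replaces $\phi$ by $\Phi(a):=\phi(a)+\phi(0),$ which is exactly your composition $\Theta=T_c\circ\theta$ with $c=\phi(0).$ Your presentation via the colour-preserving translation $T_c$ is slightly more explicit, but the construction and the key observation that $\kappa_m$ depends only on $a+b$ are identical.
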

\begin{proof}
For the purposes of this proof, assume the Kronecker product ordering of the canonical basis elements of $\R_{m,m}$
and define the one-to-one mapping $\phi : \Z_2^{2m} \To \Z_2^{2m}$ such that $\theta(\gamma_m(a)) = \gamma_m(\phi(a))$
for all $a \in \Z_2^{2m}.$
The condition that $\theta$ swaps $\varDelta_m[-1]$ with $\varDelta_m[1]$ is equivalent to the condition
\begin{align*}
\kappa_m(\phi(a)+\phi(b)) &= -\kappa_m(a+b),
\end{align*}
where $\kappa_m$ is as defined in the proof of Theorem~\ref{theorem-twins-are-strongly-regular} above.

Let $\Phi(a) := \phi(a)+\phi(0)$ for all $a \in Z_2^{2m}.$
Then $\Phi(a)+\Phi(b) = \phi(a)+\phi(b)$ for all $a,b \in Z_2^{2m},$
and therefore
\begin{align*}
\kappa_m(\Phi(a)+\Phi(b)) = \kappa_m(\phi(a)+\phi(b)) = -\kappa_m(a+b).
\end{align*}
Now define $\Theta : \varDelta_m \To \varDelta_m$ such that  $\Theta(\gamma_m(a)) = \gamma_m(\Phi(a))$
for all $a \in \Z_2^{2m}.$
\qed
\end{proof}

The second condition is simply to note that if $\theta$ swaps $\varDelta_m[-1]$ with $\varDelta_m[1],$
then for any induced subgraph $\Gamma \subset \varDelta_m$ and its image $\theta(\Gamma),$
the corresponding edges $(A,B)$ and $(\theta(A),\theta(B))$ will also have swapped colours.

These two conditions were used to design a backtracking search algorithm
to find an automorphism that satisfies Question~\ref{Question-1} or rule out its existence.
Two implementations of the search algorithm were coded: one using Python, and a faster implementation using Cython.
The source code is available on GitHub \cite{Leo15Hadamard}.
Running the search confirms the existence of an automorphism for $m=1,2,$ and $3,$ but rules it out for $m=4.$
On an Intel\textregistered ~Core\texttrademark ~i7 CPU 870 @ 2.93GHz, the Cython implementation of search for $m=4$ takes about 15 hours to run.

Since this paper was submitted, the author has found a simple proof that an automorphism 
satisfying Question~\ref{Question-1} does not exist for $m > 4$: See arXiv:1504.02827 [math.CO].
\newpage
\subsection*{Acknowledgements.}
This work was first presented at the  
Workshop on Algebraic Design Theory and Hadamard Matrices (ADTHM) 2014,
in honour of the 70th birthday of Hadi Kharaghani.
Thanks to Robert Craigen, and William Martin for valuable discussions,
and again to Robert Craigen for presenting Questions 1 and 2 at 
the workshop on ``Algebraic design theory with Hada\-mard matrices'' in Banff in July 2014.
Thanks also to the Mathematical Sciences Institute at The Australian National University 
for the author's Visiting Fellowship during 2014.
Finally, thanks to the anonymous reviewer whose comments have helped to improve this paper.

\end{document}